\begin{document}


\markboth{M. J. Lazo and D. F. M. Torres}{The Legendre Condition of the Fractional Calculus of Variations}

\title{\itshape The Legendre Condition of the Fractional Calculus of Variations}

\author{Matheus J. Lazo$^{\rm a}$
and Delfim F. M. Torres$^{\rm b}$$^{\ast}$\thanks{$^\ast$Corresponding author.
\vspace{6pt}}\\
$^{\rm a}${\em{Instituto de Matem\'{a}tica, Estat\'{\i}stica e F\'{\i}sica,
Funda\c{c}\~{a}o Universidade Federal do Rio Grande (FURG), Rio Grande, RS, Brasil.
Email: matheuslazo@furg.br}}\\
$^{\rm b}${\em{Center for Research and Development in Mathematics and Applications (CIDMA),
Department of Mathematics, University of Aveiro, 3810--193 Aveiro, Portugal.\\ Email: delfim@ua.pt}}\\
\vspace{6pt}\received{Submitted June 30, 2013; revised August 19, 2013; accepted December 16, 2013}}

\maketitle


\begin{abstract}
Fractional operators play an important role in modeling nonlocal phenomena
and problems involving coarse-grained and fractal spaces.
The fractional calculus of variations with functionals depending
on derivatives and/or integrals of noninteger order is a rather
recent subject that is currently in fast development due to its
applications in physics and other sciences.
In the last decade, several approaches to fractional variational calculus
were proposed by using different notions of fractional derivatives and integrals.
Although the literature of the fractional calculus of variations is already vast,
much remains to be done in obtaining necessary and sufficient conditions
for the optimization of fractional variational functionals, existence and regularity
of solutions. Regarding necessary optimality conditions, all works available
in the literature concern the derivation of
first-order fractional conditions of Euler--Lagrange type.
In this work we obtain a Legendre second-order necessary optimality condition
for weak extremizers of a variational functional that depends
on fractional derivatives.

\bigskip

\begin{keywords}
fractional calculus; calculus of variations and optimal control;
second-order optimality condition; Riemann--Liouville and Caputo derivatives;
Legendre condition.
\end{keywords}

\begin{classcode}
26A33; 49K05.
\end{classcode}
\bigskip
\end{abstract}


\section{Introduction}

The calculus with fractional derivatives and integrals of noninteger order
started more than three centuries ago with l'H\^opital and Leibniz,
when a derivative of order $\frac{1}{2}$ was suggested \cite{OldhamSpanier}.
The subject was then developed by several mathematicians like Euler, Fourier, Liouville,
Grunwald, Letnikov and Riemann, among many others, up to nowadays.
Although the fractional calculus is almost as old as the standard integer order calculus,
only in the last three decades it has gained more attention due to its applications
in various fields of science \cite{Diethelm,Hilfer,Kilbas,Magin,SATM,SKM}.
Fractional derivatives are generally nonlocal operators and have a great importance
in the study of nonlocal or time dependent processes, as well as to model phenomena
involving coarse-grained and fractal spaces. As an example, applications of fractional
calculus in coarse-grained and fractal spaces are found in the framework
of anomalous diffusion \cite{Klages,Metzler,Metzler2} and field theories
\cite{BI,Calcagni,Calcagni12,Lazo,Tarasov3,Vacaru}.

The fractional calculus of variations was introduced in 1996/1997
in the context of classical mechanics \cite{Riewe,Riewe2}.
In these two seminal papers, Riewe shows that a Lagrangian involving
fractional time derivatives leads to an equation of motion with
nonconservative forces such as friction. This is a remarkable result
since frictional and nonconservative forces are usually thought to be
beyond the usual macroscopic variational treatment \cite{Bauer} and,
consequently, beyond the most advanced methods of classical mechanics \cite{MyID:268}.
Riewe's approach generalizes the classical calculus of variations
for Lagrangians depending on fractional derivatives, and allows
to obtain conservation laws even in presence of nonconservative forces
\cite{MR2338631,MyID:257,MyID:268,MyID:262}.
Recently, several approaches have been developed to generalize
the least action principle and the Euler--Lagrange equations to include
fractional derivatives \cite{APT,AT,BA,Cresson,MR3019300,MyID:239,OMT}.
However, notwithstanding the literature of fractional calculus of variations is already vast
\cite{MyID:243,MalinowskaTorres,MyID:270}, all works concern, essentially, the obtention
of first-order conditions. The question of obtaining a Legendre second-order optimality
condition is an old open question in the fractional calculus of variations.
Attempts to address the problem always found many difficulties, mainly due to absence
of a good Leibniz rule \cite{tarasov:LR} and to the fact
that fractional derivatives are nonlocal operators.
The most successful result in this direction seems to be the one obtained by Bastos et al.
in \cite{MyID:179}, where a useful Legendre type condition was proved
for the discrete-time fractional calculus of variations on the time scale
$\mathbb{T} := h \mathbb{Z}$, $h > 0$. When the graininess function $h$ tends to zero,
the time scale $\mathbb{T}$ tends to the set $\mathbb{R}$ of real numbers
and the discrete operators introduced in \cite{MyID:179} to the fractional Riemann--Liouville
derivatives. Unfortunately, the obtained Legendre condition in \cite{MyID:179}
reduces, in the limit, to a truism without bringing any relevant information to the continuous case.
The authors of \cite{MyID:179} finish their paper asserting
``that a fractional Legendre condition for the continuous
fractional variational calculus is still an open question''. Here we solve the problem:
we obtain a generalization of the necessary Legendre condition for minimizers
of variational functionals dependent on Riemann--Liouville (or Caputo) fractional derivatives
(Theorem~\ref{thm:MR}).

The text is organized as follows.
In Section~\ref{sec:2} we review the basic definitions and properties
of Riemann--Liouville fractional derivatives, which are needed to formulate the fractional problem
of the calculus of variations. Our main result, the fractional Legendre second-order optimality condition,
is proved in Section~\ref{sec:3}. The usefulness of our condition
is then illustrated through an example in Section~\ref{sec:4}: the fractional Legendre condition
can be a practical tool in eliminating false candidates.
Finally, Section~\ref{sec:5} presents the main conclusions.


\section{The Riemann--Liouville Fractional Calculus}
\label{sec:2}

There are several definitions of fractional order derivatives.
Such definitions include the Riemann--Liouville, Caputo, Riesz,
Weyl, and Grunwald--Letnikov operators
\cite{Diethelm,Hilfer,Kilbas,Magin,OldhamSpanier,SATM,SKM}.
In this section we review some definitions and properties of the
Riemann--Liouville fractional calculus. Despite many different approaches
to fractional calculus, several known formulations are connected with
the analytical continuation of the Cauchy formula for $n$-fold integration.

\begin{theorem}
Let $f:[a,b]\rightarrow \mathbb{R}$ be Riemann integrable in $[a,b]$.
The $n$-fold integration of $f$, $n\in \mathbb{N}$, is given by
\begin{equation}
\label{a2}
\begin{split}
\int_{a}^x f(\tilde{x})(d\tilde{x})^{n}
&= \int_{a}^x\int_{a}^{x_{n}}\int_{a}^{x_{n-1}}
\cdots \int_{a}^{x_3}\int_{a}^{x_2} f(x_1)dx_1dx_2\cdots dx_{n-1}dx_{n}\\
&= \frac{1}{\Gamma(n)}\int_{a}^x \frac{f(u)}{(x-u)^{1-n}}du ,
\end{split}
\end{equation}
where $\Gamma$ is the Euler Gamma function.
\end{theorem}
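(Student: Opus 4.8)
The plan is to prove the Cauchy formula for repeated integration by induction on $n$, and then to recognize the single integral as the $n=1$ case of the Riemann--Liouville type kernel. First I would dispose of the base case $n=1$: the iterated integral collapses to $\int_a^x f(x_1)\,dx_1$, and on the right-hand side $\Gamma(1)=1$ and $(x-u)^{1-n}=(x-u)^0=1$, so both sides agree trivially. Then I would assume the identity holds for some $n\geq 1$ and write the $(n+1)$-fold integral as the single outer integral of the $n$-fold integral, i.e.
\begin{equation*}
\int_a^x f(\tilde{x})(d\tilde{x})^{n+1}
= \int_a^x \left( \int_a^{x_{n+1}} f(\tilde{x})(d\tilde{x})^{n} \right) dx_{n+1}
= \frac{1}{\Gamma(n)} \int_a^x \int_a^{x_{n+1}} \frac{f(u)}{(x_{n+1}-u)^{1-n}}\,du\,dx_{n+1},
\end{equation*}
using the induction hypothesis on the inner $n$-fold integral.

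The key step is then to interchange the order of integration in this double integral (Fubini/Tonelli, which is legitimate since $f$ is Riemann integrable on $[a,b]$ and the kernel, though singular at $u=x_{n+1}$ when $n=1$, is integrable there). The region $\{(u,x_{n+1}) : a\leq u\leq x_{n+1}\leq x\}$ is rewritten so that $u$ runs over $[a,x]$ and $x_{n+1}$ runs over $[u,x]$, giving
\begin{equation*}
\frac{1}{\Gamma(n)} \int_a^x f(u) \left( \int_u^x (x_{n+1}-u)^{n-1}\,dx_{n+1} \right) du
= \frac{1}{\Gamma(n)} \int_a^x f(u) \, \frac{(x-u)^{n}}{n}\,du .
\end{equation*}
Finally I would invoke the functional equation $\Gamma(n+1)=n\,\Gamma(n)$ to rewrite $\frac{1}{n\,\Gamma(n)} = \frac{1}{\Gamma(n+1)}$, obtaining $\frac{1}{\Gamma(n+1)}\int_a^x (x-u)^{n} f(u)\,du$, which is exactly the claimed formula with $n$ replaced by $n+1$. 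This closes the induction.

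I do not expect a genuine obstacle here, since the statement is the classical Cauchy formula; the only point requiring a word of care is the justification of the Fubini interchange, but for $n\geq 1$ the inner integral of the iterated-integral side is continuous in its upper limit, so the double integral is over a bounded triangular region with an integrable integrand and the swap is routine. An alternative, equally clean route would be to differentiate both sides with respect to $x$ (each application lowering the order by one) and check that they satisfy the same ordinary differential recursion together with the same vanishing initial conditions at $x=a$; I would mention this as a remark but carry out the induction-plus-Fubini argument as the main proof.
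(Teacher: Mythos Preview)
Your induction-plus-Fubini argument is correct and is the standard textbook proof of Cauchy's formula. The paper itself does not supply a proof of this theorem at all: immediately after the statement it writes that ``the proof of Cauchy's formula \ldots\ can be found in several textbooks'' and simply cites Oldham and Spanier. So there is nothing to compare against beyond noting that what you propose is precisely the classical argument the cited references contain.

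One minor inaccuracy in your parenthetical remark: you say the kernel is ``singular at $u=x_{n+1}$ when $n=1$'', but for $n=1$ the exponent is $n-1=0$ and the kernel $(x_{n+1}-u)^{n-1}$ equals $1$ identically, so there is no singularity. For every integer $n\geq 1$ the kernel $(x_{n+1}-u)^{n-1}$ is a bounded polynomial on the triangular region, and Fubini applies without any caveat. This only makes your justification cleaner.
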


The proof of Cauchy's formula \eqref{a2} can be found in several textbooks,
for example, it can be found in \cite{OldhamSpanier}. The analytical continuation
of \eqref{a2} gives a definition for integration of noninteger (or fractional) order.
This fractional order integration is the building block of the Riemann--Liouville calculus,
the most popular formulation of fractional calculus, as well as for several other approaches
\cite{Diethelm,Hilfer,Kilbas,Magin,OldhamSpanier,SATM,SKM}. The fractional integrations
obtained from \eqref{a2} are historically called the Riemann--Liouville fractional integrals.

\begin{definition}
Let $\alpha \in \mathbb{R}_+$.
The operators $_aJ^{\alpha}_x$ and $_xJ^{\alpha}_b$ defined on $L_1([a,b])$ by
\begin{equation}
\label{a3}
_aJ^{\alpha}_x f(x)
=\frac{1}{\Gamma(\alpha)}\int_{a}^x \frac{f(u)}{(x-u)^{1-\alpha}}du
\end{equation}
and
\begin{equation}
\label{a4}
_xJ^{\alpha}_b f(x)
=\frac{1}{\Gamma(\alpha)}\int_x^b \frac{f(u)}{(u-x)^{1-\alpha}}du,
\end{equation}
where $a,b\in \mathbb{R}$ with $a<b$, are called the left and the right
Riemann--Liouville fractional integrals of order $\alpha$, respectively.
\end{definition}

For an integer $\alpha$, the fractional Riemann--Liouville integral
\eqref{a3} coincide with the usual integer order
$n$-fold integration \eqref{a2}. Moreover, from definitions \eqref{a3}
and \eqref{a4}, it is easy to see that the Riemann--Liouville fractional
integrals converge for any integrable function $f$ if $\alpha>1$.
Furthermore, it is possible to prove the convergence of \eqref{a3}
and \eqref{a4} for $f\in L_1([a,b])$ even when $0<\alpha<1$ \cite{Diethelm}.

The integration operators $_aJ^{\alpha}_x$ and $_xJ^{\alpha}_b$
play a fundamental role in the definition of Riemann--Liouville fractional
derivatives. In order to define the fractional derivatives, we recall that
for positive integers $n>m$ it follows the identity
$D^m_x f(x)=D^{n}_x {_aJ}^{n-m}_x f(x)$, where $D^m_x$
is the ordinary derivative $\frac{d^m}{dx^m}$ of order $m$.

\begin{definition}
The left and the right Riemann--Liouville fractional derivatives
of order $\alpha \in \mathbb{R}_+$ are defined, respectively,
by ${_aD}^{\alpha}_x f(x) = D^{n}_x {_aJ}^{n-\alpha}_x f(x)$
and ${_xD}^{\alpha}_b f(x)=(-1)^n D^{n}_x {_xJ}^{n-\alpha}_b f(x)$,
where $n=[\alpha]+1$, that is,
\begin{equation}
\label{a5}
{_aD}^{\alpha}_x f(x)
=\frac{1}{\Gamma(n-\alpha)}\frac{d^n}{dx^n}
\int_{a}^x \frac{f(u)}{(x-u)^{1+\alpha-n}}du
\end{equation}
and
\begin{equation}
\label{a6}
{_xD}^{\alpha}_b f(x)
=\frac{(-1)^n}{\Gamma(n-\alpha)}\frac{d^n}{dx^n}
\int_{x}^b \frac{f(u)}{(u-x)^{1+\alpha-n}}du
\end{equation}
with $n$ the smallest integer greater than $\alpha$.
\end{definition}

An important consequence of definitions \eqref{a5} and \eqref{a6}
is that the Riemann--Liouville fractional derivatives
are nonlocal operators. The left (resp. right) differ-integration operator
\eqref{a5} (resp. \eqref{a6}) depend on the values of the function
at left (resp. right) of $x$, i.e., $a\leq u \leq x$ (resp. $x\leq u \leq b$).

\begin{remark}
When $\alpha$ is an integer, the Riemann--Liouville fractional derivatives
\eqref{a5} and \eqref{a6} reduce to ordinary derivatives of order $\alpha$.
\end{remark}

\begin{remark}
For $0<\alpha<1$, and because they are given by a first-order derivative of a fractional integral,
the Riemann--Liouville fractional derivatives \eqref{a5} and \eqref{a6} can be applied
to nondifferentiable functions. Actually, one can have Riemann--Liouville derivatives
of nowhere differentiable functions, like the Weierstrass function
\cite{KolwankarGangal,MR1313679}.
\end{remark}

\begin{remark}
It is important to notice that the Riemann--Liouville derivatives
\eqref{a5} and \eqref{a6} of a constant $\xi$ are not zero
for $\alpha \notin \mathbb{N}$. More precisely, one has
\begin{equation}
\label{a10}
{_a D}^{\alpha}_x \xi = \frac{\xi (x-a)^{-\alpha}}{\Gamma(1-\alpha)},
\quad {_x D}^{\alpha}_b \xi = \frac{\xi (b-x)^{-\alpha}}{\Gamma(1-\alpha)}.
\end{equation}
As $\alpha \to m \in \mathbb{N}$, the right hand sides of \eqref{a10} become equal to zero
due to the poles of the Gamma function. Furthermore, for the power function $(x-a)^{\beta}$, one has
\begin{equation}
\label{a11}
{_a D}^{\alpha}_x (x-a)^{\beta}
=\frac{\Gamma(\beta+1)}{\Gamma(\beta+1-\alpha)}(x-a)^{\beta-\alpha}.
\end{equation}
\end{remark}

Finally, as our goal is to obtain a fractional generalization of the second-order
Legendre condition, it is important to mention the fractional first-order
Euler--Lagrange equation \cite{MalinowskaTorres}.

\begin{theorem}
\label{thm:ml:03}
Let $J$ be a functional of the form
\begin{equation*}
J[y]=\int_a^b L\left(x,y(x),{_a D}^{\alpha}_x y(x)\right) dx
\end{equation*}
defined in the class of functions $y \in C([a,b])$
satisfying given boundary conditions  $y(a) = y_a$ and $y(b)=y_b$
and where $L$ is differentiable with respect to all of its arguments.
If $y^{\ast}$ is an extremizer of $J$, then $y^{\ast}$ satisfies
the following fractional Euler--Lagrange equation:
\begin{equation}
\label{t1}
\frac{\partial L\left(x,y(x),{_a D}^{\alpha}_x y(x)\right)}{\partial y}
+{_x D}^{\alpha}_b\frac{\partial
L\left(x,y(x),{_a D}^{\alpha}_x y(x)\right)}{\partial ({_a D}^{\alpha}_x y)}=0,
\end{equation}
$x \in [a,b]$.
\end{theorem}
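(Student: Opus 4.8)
The plan is to adapt the classical first-variation argument to the nonlocal setting. I would start by fixing an admissible variation $\eta \in C([a,b])$ with $\eta(a)=\eta(b)=0$ and for which ${_aD}^{\alpha}_x\eta$ exists, and then consider the one-parameter family $y = y^{\ast} + \varepsilon\eta$. Since the Riemann--Liouville derivative \eqref{a5} is linear, ${_aD}^{\alpha}_x y = {_aD}^{\alpha}_x y^{\ast} + \varepsilon\,{_aD}^{\alpha}_x\eta$, so the real function $j(\varepsilon):=J[y^{\ast}+\varepsilon\eta]$ is well defined near $\varepsilon=0$; using differentiability of $L$ and differentiation under the integral sign one gets
\begin{equation*}
j'(0)=\int_a^b\left[\frac{\partial L}{\partial y}\left(x,y^{\ast},{_aD}^{\alpha}_x y^{\ast}\right)\eta(x)+\frac{\partial L}{\partial ({_aD}^{\alpha}_x y)}\left(x,y^{\ast},{_aD}^{\alpha}_x y^{\ast}\right){_aD}^{\alpha}_x\eta(x)\right]dx .
\end{equation*}
Because $y^{\ast}$ is an extremizer, $j'(0)=0$ for every such $\eta$.

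The second step is to move the fractional derivative off $\eta$ by the fractional integration by parts formula, which converts $\int_a^b g\,{_aD}^{\alpha}_x\eta\,dx$ into $\int_a^b\eta\,{_xD}^{\alpha}_b g\,dx$ up to boundary terms; those boundary terms vanish because $\eta$ (and the associated fractional-integral boundary quantities) are zero at $x=a$ and $x=b$. Taking $g=\partial L/\partial({_aD}^{\alpha}_x y)$ evaluated along $y^{\ast}$, we arrive at
\begin{equation*}
\int_a^b\left[\frac{\partial L}{\partial y}+{_xD}^{\alpha}_b\frac{\partial L}{\partial ({_aD}^{\alpha}_x y)}\right]\eta(x)\,dx=0 .
\end{equation*}
Finally, the fundamental lemma of the calculus of variations, applied within the chosen subclass of variations $\eta$, forces the bracketed expression to vanish identically on $[a,b]$, which is exactly \eqref{t1}.

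The main obstacle is analytic rather than conceptual: one must justify the interchange of $d/d\varepsilon$ with $\int_a^b$ and, above all, establish the fractional integration by parts formula together with the vanishing of its boundary terms. Both require imposing enough regularity on the admissible functions so that the Riemann--Liouville derivatives ${_aD}^{\alpha}_x$, ${_xD}^{\alpha}_b$ and the underlying fractional integrals exist and are integrable up to the endpoints; the nonlocality of these operators makes this bookkeeping more delicate than in the classical case and is precisely what restricts the class of admissible $\eta$ to which the fundamental lemma may be applied. I refer to \cite{MalinowskaTorres} for the precise hypotheses under which each of these steps is valid.
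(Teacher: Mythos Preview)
The paper does not actually prove Theorem~\ref{thm:ml:03}; it is quoted from the literature (with the attribution ``\cite{MalinowskaTorres}'' immediately before the statement) and the subsequent Remark explicitly directs the reader to \cite{MalinowskaTorres} and references therein for the precise hypotheses and details. So there is no ``paper's own proof'' to compare against.

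That said, your sketch is exactly the standard argument used in those references: linearity of ${_aD}^{\alpha}_x$ to pass to a one-parameter family, differentiation under the integral sign to compute the first variation, the fractional integration-by-parts identity to transfer ${_aD}^{\alpha}_x$ from $\eta$ onto $\partial L/\partial({_aD}^{\alpha}_x y)$ as a right derivative ${_xD}^{\alpha}_b$, and the fundamental lemma. Your closing caveat about regularity is also in line with the paper's own Remark, which stresses that one typically needs additional assumptions (e.g.\ continuity of ${_aD}^{\alpha}_x y$ on $[a,b]$, which forces $y(a)=0$, and continuity of the second term in \eqref{t1}) for the integration-by-parts step and the fundamental lemma to go through. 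Nothing in your outline is wrong; it simply reproduces what the paper chose to cite rather than prove.
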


\begin{remark}
Theorem~\ref{thm:ml:03} is often proved
under the hypothesis that admissible functions $y$
have continuous left fractional derivatives
on the closed interval $[a,b]$. Such assumption implies that
$y(a)=0$ (\textrm{cf.} \cite{MR1313679}).
Moreover, it is also common to impose some conditions
of continuity on the second addterm of \eqref{t1}.
The reader interested in these details is referred to \cite{MalinowskaTorres}
and references therein. For our current purposes it is enough to know that \eqref{t1}
holds under appropriate assumptions on the data of the problem.
\end{remark}

In analogy with the classical calculus of variations,
the solutions to \eqref{t1} are called extremals.


\section{The Legendre Necessary Optimality Condition}
\label{sec:3}

In order to generalize the classical Legendre necessary condition
\cite{vanBrunt} for functionals with Riemann--Liouville fractional
derivatives, we use the following lemma.

\begin{lemma}
\label{lemma}
Let $P$, $Q$ and $R$ be at least piecewise-continuous functions on $[a,b]$,
and $0<\alpha<1$. A necessary condition for
\begin{equation}
\label{l1}
\int_a^b\left[P(x) \, \left({_a D}^{\alpha}_x \eta(x)\right)^2
+Q(x) \, \eta(x) \, {_a D}^{\alpha}_x \eta(x)
+ R(x) \, \left(\eta(x)\right)^2\right]dx \geq 0
\end{equation}
for all $\eta \in C([a,b])$ satisfying $\eta(a)=\eta(b)=0$
is that $P(x)\geq 0$ for all $x \in [a,b]$.
\end{lemma}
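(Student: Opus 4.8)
The plan is to mimic the classical proof of the Legendre necessary condition, where one constructs a family of admissible variations $\eta$ that are small in $C$-norm but whose derivatives are large and concentrated near an arbitrary interior point $x_0 \in (a,b)$. In the classical integer-order case one takes a ``zigzag'' or sawtooth function supported on an interval of length $2\varepsilon$ around $x_0$, with slope of order $1$, so that $\eta = O(\varepsilon)$ while $(\eta')^2$ integrates to something of order $1$ on an interval of measure $\varepsilon$; letting $\varepsilon \to 0$ forces $P(x_0) \ge 0$ by continuity (or piecewise continuity) of $P$. Concretely I would fix $x_0$ and $\varepsilon > 0$ small, and set
\begin{equation*}
\eta_\varepsilon(x) = \begin{cases} \varepsilon - |x - x_0| & \text{if } |x-x_0| \le \varepsilon,\\ 0 & \text{otherwise,}\end{cases}
\end{equation*}
which is continuous, vanishes at $a$ and $b$, and satisfies $\|\eta_\varepsilon\|_\infty = \varepsilon$.

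Next I would estimate each of the three integrals in \eqref{l1} with $\eta = \eta_\varepsilon$. For the $R$-term, $|R(x)(\eta_\varepsilon(x))^2| \le \|R\|_\infty \varepsilon^2$ on an interval of length $2\varepsilon$, so that term is $O(\varepsilon^3) \to 0$. The key computation is ${_aD}^\alpha_x \eta_\varepsilon(x)$: since the Riemann--Liouville derivative is nonlocal, I cannot simply say it is supported near $x_0$, but I can split ${_aD}^\alpha_x \eta_\varepsilon = D_x \, {_aJ}^{1-\alpha}_x \eta_\varepsilon$ and use $|{_aJ}^{1-\alpha}_x \eta_\varepsilon(x)| \le \frac{\|\eta_\varepsilon\|_\infty}{\Gamma(2-\alpha)}(x-a)^{1-\alpha} = O(\varepsilon)$ uniformly, while near $x_0$ the derivative behaves, to leading order, like the derivative of the local part, producing a contribution of order $\varepsilon^{1-\alpha}$ that does not vanish relative to the $P$-term. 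The delicate point is to show that $\int_a^b P(x) ({_aD}^\alpha_x \eta_\varepsilon)^2 \, dx$, after suitable normalization, is dominated by $P(x_0)$ times a strictly positive quantity, while the cross term $\int_a^b Q(x) \eta_\varepsilon \, {_aD}^\alpha_x \eta_\varepsilon \, dx$ is of strictly lower order and hence negligible.

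The hard part — and where the nonlocality genuinely bites — is making precise the claim that the weighted $L^2$-norm of ${_aD}^\alpha_x \eta_\varepsilon$ ``localizes'' enough that only $P(x_0)$ survives in the limit, since ${_aD}^\alpha_x \eta_\varepsilon(x)$ is in fact nonzero for \emph{all} $x > x_0 - \varepsilon$, with a tail decaying like $(x - x_0)^{-\alpha}$ times $\varepsilon$-dependent factors. One route is to rescale: write $x = x_0 + \varepsilon t$, substitute, and show that $\varepsilon^{-\beta}\,{_aD}^\alpha_x \eta_\varepsilon(x_0 + \varepsilon t)$ converges (in an appropriate weighted-$L^2$ sense, or pointwise a.e.\ with a dominating function) to a fixed profile $\varphi(t)$ depending only on $\alpha$, for the right exponent $\beta$; then $\int P ({_aD}^\alpha_x \eta_\varepsilon)^2 \sim \varepsilon^{2\beta+1} P(x_0) \int \varphi(t)^2\,dt$ by continuity of $P$ at $x_0$, and dividing through by the (positive) quantity $\varepsilon^{2\beta+1}\int \varphi^2$ and letting $\varepsilon \to 0^+$ yields $P(x_0)\int\varphi^2 \ge 0$, hence $P(x_0) \ge 0$. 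The subtleties are (i) checking $\int \varphi(t)^2\,dt > 0$ and finite — finiteness requires controlling both the $t \to 0$ corner singularity and the $t \to \infty$ tail $\varphi(t) \sim c\, t^{-\alpha}$, which is square-integrable precisely because $\alpha > 1/2$ fails in general, so one may instead need to keep the weight $P$ compactly cut off or argue the tail contributes a lower-order, sign-indefinite but vanishing term; and (ii) handling the endpoint dependence, i.e.\ the $(x-a)^{1-\alpha}$-type factors that distinguish the left Riemann--Liouville derivative from a convolution — these are bounded and continuous away from $a$, so they just get absorbed into constants. Finally, since $x_0 \in (a,b)$ was arbitrary and $P$ is (piecewise) continuous, $P \ge 0$ on all of $[a,b]$.
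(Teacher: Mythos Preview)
Your plan is a recognisable Legendre–type localisation argument, but it remains a sketch, and the route the paper takes is quite different. The paper never uses a tent function and never has to wrestle with the nonlocal tail. Instead, arguing by contradiction that $P<-K_1<0$ on some subinterval $[c,c+d]$, it writes down an explicit cubic
\[
f(x)=\frac{1}{d}(x-c)-\frac{5-\alpha}{2d^2}(x-c)^2+\frac{3-\alpha}{2d^3}(x-c)^3,
\]
whose coefficients are tuned so that $f(c)=f(c+d)=0$ \emph{and} ${_cD}^{\alpha}_x f$ vanishes at both endpoints $c$ and $c+d$. The zero-extension $\tilde\eta$ of $f$ is then taken as the test function, and the paper uses the endpoint condition ${_cD}^{\alpha}_{c+d}f=0$ to assert that ${_aD}^{\alpha}_x\tilde\eta$ is itself supported in $[c,c+d]$. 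After that, the quadratic form in \eqref{l1} reduces to an integral over $[c,c+d]$ only; with the crude bounds $|f|<1$ and $|{_cD}^{\alpha}_x f|<d^{-\alpha}$ one gets the upper estimate $d^{\,1-2\alpha}(-K_1+K_2 d^{\alpha}+K_3 d^{2\alpha})$, which is negative for $d$ small. In short, the nonlocal difficulty that occupies most of your proposal is meant to be eliminated \emph{by design of the test function} rather than by asymptotic analysis of a tail.

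On your own attempt: the obstruction you flag in (i) is a miscalculation, not a genuine barrier. For $x$ to the right of the support of $\eta_\varepsilon$ the kernel singularity in ${_aJ}^{1-\alpha}_x\eta_\varepsilon$ is absent, so you may differentiate under the integral sign and obtain
\[
{_aD}^{\alpha}_x\eta_\varepsilon(x)=\frac{-\alpha}{\Gamma(1-\alpha)}\int_{x_0-\varepsilon}^{x_0+\varepsilon}\eta_\varepsilon(u)\,(x-u)^{-\alpha-1}\,du;
\]
after the rescaling $x=x_0+\varepsilon t$ this gives $\varphi(t)\sim c\,t^{-\alpha-1}$, not $t^{-\alpha}$ (the extra power comes from the outer $D_x$ in ${_aD}^{\alpha}_x=D_x\,{_aJ}^{1-\alpha}_x$). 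Since $t^{-\alpha-1}\in L^2$ at infinity for every $\alpha\in(0,1)$, your $\int\varphi^2$ is finite in all cases, and the dominated-convergence passage to $P(x_0)\int\varphi^2$ is available; the $Q$- and $R$-terms scale as $\varepsilon^{3-\alpha}$ and $\varepsilon^{3}$ against $\varepsilon^{3-2\alpha}$ for the $P$-term and drop out. So your programme can in principle be completed, but as it stands it stops well short of a proof and is considerably heavier than the paper's explicit construction.
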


\begin{proof}
Suppose, by contradiction, that the lemma is false.
Then, since $P(x)$ is at least piecewise-continuous,
there must exist a subinterval $[c,c+d] \subset [a,b]$,
$d>0$, throughout which $P(x)<-K_1$, where $K_1>0$.
Likewise, because $Q(x)$ and $R(x)$ are at least piecewise-continuous,
there must exist constants $K_2$ and $K_3$ such that $Q(x)< K_2$ and $R(x)< K_3$
for all  $x \in [c,c+d]$. Define the function
\begin{equation}
\label{eq:fuct:f}
f(x) = \frac{1}{d}(x-c)-\frac{5-\alpha}{2d^2}(x-c)^2
+\frac{3-\alpha}{2d^3}(x-c)^3.
\end{equation}
From \eqref{a11} it follows that
\begin{equation*}
{_c D}^{\alpha}_x f(x)=\frac{1}{\Gamma(2-\alpha)}\left[
\frac{1}{d}(x-c)^{1-\alpha}-\frac{5-\alpha}{\left(2-\alpha\right)d^2}
(x-c)^{2-\alpha}+\frac{3}{(2-\alpha)d^3}(x-c)^{3-\alpha}\right]
\end{equation*}
and direct calculations show that
$f(c)=f(c+d)={_c D}^{\alpha}_c f(x)={_c D}^{\alpha}_{c+d} f(x)=0$.
We also note that
\begin{equation*}
|f(x)|<1 \quad \mbox{ and }
\quad \left|{_c D}^{\alpha}_x f(x)\right|<\frac{1}{d^{\alpha}}
\end{equation*}
for all $x\in [c,c+d]$ and $d>0$. Let us now define
\begin{equation}
\label{l3}
\tilde{\eta}(x)=
\begin{cases}
0 & \text{ if } \ a\leq x <c, \\
f(x) & \text{ if } \ c \leq x \leq c+d,\\
0 & \text{ if } \ c+d < x \leq b.
\end{cases}
\end{equation}
Then, $\tilde{\eta}\in C([a,b])$ and $\tilde{\eta}(a)=\tilde{\eta}(b)=0$.
Furthermore, since ${_c D}^{\alpha}_{c+d} f(x)=0$, we have
\begin{equation}
\label{l4}
{_a D}^{\alpha}_x \tilde{\eta}(x)=
\begin{cases}
0 & \text{ if } \ a\leq x <c, \\
{_c D}^{\alpha}_x f(x) & \text{ if } \ c \leq x \leq c+d,\\
0 & \text{ if } \ c+d < x \leq b.
\end{cases}
\end{equation}
By inserting \eqref{l3} and \eqref{l4} into the integral \eqref{l1}, we obtain
\begin{equation*}
\begin{split}
\int_a^b \bigl[P(x) &\left({_a D}^{\alpha}_x \tilde{\eta}(x)\right)^2
+Q(x) \tilde{\eta}(x) {_a D}^{\alpha}_x \tilde{\eta}(x)
+ R(x) \left(\tilde{\eta}(x)\right)^2\bigr]dx\\
&= \int_c^{c+d}\left[P(x) \left({_c D}^{\alpha}_x f(x)\right)^2
+Q(x) f(x) {_c D}^{\alpha}_x f(x)
+ R(x) \left(f(x) \right)^2\right]dx\\
&< d^{1-2\alpha}\left[-K_1+K_2d^{\alpha} +K_3d^{2\alpha}\right],
\end{split}
\end{equation*}
which is negative for sufficiently small $d$ and thus a contradiction.
\end{proof}

Follows the main result of the paper: the fractional Legendre condition.

\begin{theorem}
\label{thm:MR}
For the extremal $y^{\ast}\in C([a,b])$ to yield
a weak relative minimum (resp. maximum) to functional
\begin{equation}
\label{t2}
J[y]=\int_a^b L\left(x,y(x),{_aD}^{\alpha}_x y(x)\right)dx
\end{equation}
defined in $\left\{y \, | \, y \in C([a,b]), y(a)=y_a, y(b)=y_b\right\}$,
where $L\in C^2\left([a,b]\times \mathbb{R}^2\right)$ and
$\alpha \in (0,1)$, it is necessary that
\begin{equation}
\label{t3}
\frac{\partial^2 L\left(x,y^{\ast}(x),{_aD}^{\alpha}_x y^{\ast}(x)\right)}{
\partial \left({_aD}^{\alpha}_x y\right)^2} \geq 0
\quad (\text{resp. } \leq 0)
\end{equation}
for all $x\in [a,b]$.
\end{theorem}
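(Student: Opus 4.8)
The plan is to mimic the classical derivation of the Legendre condition via the second variation, and then invoke Lemma~\ref{lemma}. First I would set up the second variation: fix the extremal $y^\ast$ and take an admissible variation $y = y^\ast + \varepsilon\eta$ with $\eta \in C([a,b])$, $\eta(a)=\eta(b)=0$. Writing $\phi(\varepsilon) = J[y^\ast+\varepsilon\eta]$, the fact that $y^\ast$ yields a weak relative minimum forces $\phi'(0)=0$ (this is the Euler--Lagrange content of Theorem~\ref{thm:ml:03}) and $\phi''(0)\geq 0$. Since $L\in C^2$, I can differentiate under the integral sign twice and use ${_aD}^\alpha_x(y^\ast+\varepsilon\eta) = {_aD}^\alpha_x y^\ast + \varepsilon\,{_aD}^\alpha_x\eta$ (linearity of the Riemann--Liouville operator) to obtain
\begin{equation*}
\phi''(0)=\int_a^b\Bigl[L_{yy}\,\eta^2 + 2L_{y\,w}\,\eta\,{_aD}^\alpha_x\eta + L_{ww}\,({_aD}^\alpha_x\eta)^2\Bigr]dx \geq 0,
\end{equation*}
where $w := {_aD}^\alpha_x y$ and all partial derivatives are evaluated along $(x,y^\ast(x),{_aD}^\alpha_x y^\ast(x))$.

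Next I would identify this integrand with the one in Lemma~\ref{lemma}, setting $P(x)=L_{ww}$, $Q(x)=2L_{y\,w}$, $R(x)=L_{yy}$, all evaluated at the extremal. Because $L\in C^2([a,b]\times\mathbb{R}^2)$ and $y^\ast,{_aD}^\alpha_x y^\ast$ are continuous on $[a,b]$, these three functions are continuous — in particular piecewise-continuous — on $[a,b]$, so the hypotheses of Lemma~\ref{lemma} are met. The inequality $\phi''(0)\geq 0$ holding for every admissible $\eta$ is exactly condition \eqref{l1}, so the lemma yields $P(x)=L_{ww}\geq 0$ on $[a,b]$, which is precisely \eqref{t3}. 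The maximum case follows by applying the argument to $-J$ (or noting $\phi''(0)\leq 0$ and using the lemma with signs reversed).

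The main obstacle I anticipate is justifying the step $\phi''(0)=\int_a^b(\cdots)\,dx$ rigorously — that is, differentiation under the integral sign and the continuity of the integrand as a function of $x$. The delicate point is that for $0<\alpha<1$ the quantity ${_aD}^\alpha_x\eta(x)$ need not be bounded or even continuous near $x=a$ for a general $\eta\in C([a,b])$ (recall the Remark noting ${_aD}^\alpha_x$ of a constant blows up like $(x-a)^{-\alpha}$), so one must be careful that $L_{ww}\cdot({_aD}^\alpha_x\eta)^2$ is integrable and that the Taylor remainder in $\varepsilon$ is controlled uniformly. I would handle this exactly as in Lemma~\ref{lemma}: it suffices to test with variations $\eta$ supported on a compact subinterval $[c,c+d]\subset(a,b)$ bounded away from the endpoint $a$, for which ${_aD}^\alpha_x\eta = {_cD}^\alpha_x\eta$ is continuous and bounded on its support; restricting to such $\eta$ still gives enough test functions for Lemma~\ref{lemma} to force $P\geq 0$ at every interior point, and continuity of $P$ then extends the inequality to the closed interval $[a,b]$. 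A secondary routine point is that $\phi'(0)=0$ is needed only to confirm we are genuinely looking at the sign of the second-order term; it is supplied by Theorem~\ref{thm:ml:03} and need not be re-derived.
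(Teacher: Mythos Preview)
Your proposal is correct and follows essentially the same route as the paper: compute the second G\^ateaux variation along $y^\ast+\varepsilon\eta$, identify the quadratic integrand with $P,Q,R$ given by the second partials of $L$ at the extremal, and invoke Lemma~\ref{lemma}. Your added caution about differentiating under the integral sign and the possible singularity of ${_aD}^\alpha_x\eta$ near $x=a$ is a genuine technical point that the paper simply passes over (it just cites a standard result on the nonnegativity of the second variation), so in that respect your write-up is slightly more careful than the original.
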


\begin{proof}
Let $y^{\ast}$ give a minimum to
$\displaystyle J[y]=\int_a^b L\left(x,y(x),{_aD}^{\alpha}_x y(x)\right)dx$.
We define a family of functions
\begin{equation}
\label{b10}
y(x)=y^{\ast}(x)+\epsilon \eta(x),
\end{equation}
where $\epsilon$ is a constant and $\eta \in C([a,b])$
is an arbitrary continuous function
satisfying the boundary conditions
$\eta(a)=\eta(b)=0$ (weak variations).
From \eqref{b10} and the boundary conditions $\eta(a)=\eta(b)=0$
and $y^{\ast}(a)=y_a$, $y^{\ast}(b)=y_b$,
it follows that function $y$ is admissible:
$y \in C([a,b])$ with $y(a)=y_a$ and $y(b)=y_b$.
Because $y^{\ast}$ is a minimizer of functional $J$, we
should have $J[y]-J[y^{\ast}]\geq 0$ for all $y$ defined by \eqref{b10}.
Let the Lagrangian $L$ be $C^2([a,b]\times \mathbb{R}^2)$. In this case,
we have $J[y]-J[y^{\ast}]\geq 0$ only if (see, for example, Theorem~1.8.2 of \cite{Sagan})
\begin{equation}
\label{b11}
\delta^2 J[y]=\frac{d^2}{d\epsilon^2}
J[y^{\ast}+\epsilon \eta]_{\epsilon=0}\geq 0,
\end{equation}
where $\delta^2 J[y]$ is the second G\^ateaux variation of $J[y]$.
For \eqref{t2} the necessary optimality condition \eqref{b11} asserts that
\begin{equation}
\label{b12}
\delta^2 J[y]
=\int_a^b\left[P(x) \left({_a D}^{\alpha}_x \eta(x)\right)^2
+ Q(x) \eta(x) {_a D}^{\alpha}_x \eta(x)
+ R(x) \left(\eta(x)\right)^2\right]dx \geq 0,
\end{equation}
where the functions
\begin{eqnarray*}
P(x)&=&\frac{\partial^2 L\left(x,y^{\ast}(x),{_aD}^{\alpha}_x y^{\ast}(x)\right)}{
\partial \left({_{a}D}^{\alpha}_x y\right)^2},\\
Q(x)&=& 2 \, \frac{\partial^2 L\left(x,y^{\ast}(x),{_aD}^{\alpha}_x y^{\ast}(x)\right)}{
\partial y\partial \left({_aD}^{\alpha}_x y\right)},\\
R(x)&=&\frac{\partial^2 L\left(x,y^{\ast}(x),{_aD}^{\alpha}_x y^{\ast}(x)\right)}{\partial y^2},
\end{eqnarray*}
are continuous since $L\in C^2\left([a,b]\times \mathbb{R}^2\right)$.
Because $\eta \in C([a,b])$, the fractional Legendre necessary optimality condition \eqref{t3}
follows from \eqref{b12} and Lemma~\ref{lemma}.
\end{proof}


\section{An Illustrative Example}
\label{sec:4}

Let $\alpha \in (0,1)$. Consider the functional
\begin{equation}
\label{b13}
J[y]=\int_{-1}^{1} x\sqrt{1+\left({_{-1}D}^{\alpha}_x y(x)\right)^2}dx
\end{equation}
subject to given boundary conditions
\begin{equation}
\label{b14}
y(-1)=y_{-1}, \quad y(1)=y_{1}.
\end{equation}
We can write the associated Euler--Lagrange equation \eqref{t1}
and try to find the corresponding extremals
to problem \eqref{b13}--\eqref{b14}.
However, using our Legendre condition \eqref{t3},
we can immediately conclude that the Euler--Lagrange extremals
are not local extremizers. Indeed, the Lagrangian
is given by $L=x\sqrt{1+\left({_{-1}D}^{\alpha}_xy\right)^2}$ and
\begin{equation*}
\frac{\partial^2L}{\partial \left({_{-1}D}^{\alpha}_x y\right)^2}
=\frac{x}{\sqrt{\left(1+\left({_{-1}D}^{\alpha}_xy\right)^2\right)^3}},
\end{equation*}
so that the sign of $\frac{\partial^2L}{\partial \left({_{-1}D}^{\alpha}_x y\right)^2}$
changes in the interval $[-1,1]$: it is nonpositive for  $x\in [-1,0]$ and nonnegative
for $x\in [0,1]$.


\section{Conclusions}
\label{sec:5}

The fractional calculus of variations, with functionals dependent
on fractional derivatives and integrals, has experienced a huge development
in the last few years due to its many applications
in several different areas \cite{MalinowskaTorres}.
Despite the quite vast literature in this recent field, most of the works
concern the obtention of first-order fractional optimality conditions.
In this context, much remains to be done. Main difficulties reside
in the absence of a good Leibniz rule and on the nonlocal properties
of the fractional operators. Here we provided the first generalization
of Legendre's second-order necessary optimality condition
to the continuous fractional calculus of variations.
The usefulness of the new condition is illustrated with an example.
Although the obtained result is formulated for functionals dependent
on Riemann--Liouville fractional derivatives, the techniques
here introduced are still valid for other types
of fractional operators, in particular for Caputo derivatives:
the function \eqref{eq:fuct:f} used here for
Riemann--Liouville derivatives is also valid
for Caputo derivatives because for this polynomial $f$
the derivatives coincide, the same being true
for function $\tilde{\eta}$ given by \eqref{l3}.


\section*{Acknowledgements}

M. J. Lazo was partially supported
by CNPq and CAPES (Brazilian research funding agencies);
D. F. M. Torres by FCT
(Portuguese Foundation for Science and Technology)
through CIDMA and project PEst-C/MAT/UI4106/2011
with COMPETE number FCOMP-01-0124-FEDER-022690.

The authors would like to thank the anonymous referees
for their careful reading of the submitted
manuscript and for suggesting useful changes.



\end{document}